\documentclass[12pt]{amsart}
\textwidth 160mm
\textheight 230mm
\oddsidemargin 0.70cm
\evensidemargin 0.70cm
\usepackage[utf8]{inputenc}
\usepackage{graphicx}
\usepackage{epstopdf}
\usepackage{inputenc}
\usepackage{enumitem}
\usepackage{amsmath}
\usepackage{xcolor}
\usepackage{amssymb}
\usepackage{mathrsfs}

\topmargin -1.3cm
\textwidth 160mm
\textheight 230mm
\oddsidemargin 0.70cm
\evensidemargin 0.70cm
\usepackage[utf8]{inputenc}
\usepackage{graphicx}
\usepackage{epstopdf}
\usepackage{inputenc}
\usepackage{enumitem}
\usepackage{amsmath}
\usepackage{xcolor}
\usepackage{amssymb}
\usepackage{dsfont}
\usepackage[colorlinks]{hyperref}

\def\house#1{\setbox1=\hbox{$\,#1\,$}%
	\dimen1=\ht1 \advance\dimen1 by 2pt \dimen2=\dp1 \advance\dimen2 by 2pt
	\setbox1=\hbox{\vrule height\dimen1 depth\dimen2\box1\vrule}%
	\setbox1=\vbox{\hrule\box1}%
	\advance\dimen1 by .4pt \ht1=\dimen1
	\advance\dimen2 by .4pt \dp1=\dimen2 \box1\relax}

\newtheorem{theorem}{Theorem}
\newtheorem*{theorem*}{Theorem}

\newtheorem{lemma}{Lemma}

\newtheorem*{acknowledgements*}{Acknowledgements}
\newtheorem{remark}{Remark}

\newcommand{\ve}{\varepsilon}

\newcommand{\mcal}{\mathcal}

\newcommand{\gm}{\gamma}
\newcommand\blfootnote[1]{
	\begingroup
	\renewcommand\thefootnote{}\footnote{#1}
	\endgroup
}

\begin{document}
\title[Large values of quadratic Dirichlet $L$-functions]{Large values of quadratic Dirichlet $L$-functions over monic irreducible polynomial in $\mathbb{F}_q[t]$}
\author{Pranendu Darbar and Gopal Maiti }
\address[Pranendu Darbar]{Department of Mathematical Sciences\\
	Norwegian University of Science and Technology\\
	NO-7491, Trondheim, Norway}
\email{darbarpranendu100@gmail.com}

\address[Gopal Maiti]{Institut de Math\'ematiques de Marseille,
	Aix Marseille Universit\'e, U.M.R. 7373,
	Campus de Luminy, Case 907,
	13288 MARSEILLE Cedex 9, France}
\email{g.gopaltamluk@gmail.com}

\keywords{Finite fields, Function fields, Large values of $L$-functions, Hyperelliptic curves}

 \begin{abstract}
 	We prove an $\Omega$-result for the quadratic Dirichlet $L$-function $|L(1/2, \chi_P)|$ over irreducible polynomials $P$ associated with the hyperelliptic curve of genus $g$ over a fixed finite field $\mathbb{F}_q$ in the large genus limit. In particular, we showed that for any $\epsilon\in (0, 1/2)$,
 	\[
 	\max_{\substack{P\in \mathcal{P}_{2g+1}}}|L(1/2, \chi_P)|\gg \exp\left(\left(\sqrt{\left(1/2-\epsilon\right)\ln q}+o(1)\right)\sqrt{\frac{g \ln_2 g}{\ln g}}\right),
 	\]
 	where $\mathcal{P}_{2g+1}$ is the set of all monic irreducible polynomial of degree $2g+1$. This matches with the order of magnitude of the Bondarenko--Seip bound. 
   \end{abstract}

\blfootnote{2010 {\it Mathematics Subject Classification}: 11R59, 11G20, 11T06.}

\maketitle
  
\section{Introduction} 
 A fundamental problem in number theory is understanding the maximum size of $L$-functions on the critical line. The conjectured maximum sizes take shapes depending on the symmetry group attached to the family of $L$-functions, given in the work of Farmer, Gonek, and Hughes \cite{FGH}. The symmetry groups are classical compact groups: Unitary, Symplectic, and Orthogonal. One expects the analytic properties of the $L$-functions to be largely governed only by the symmetry type via Random Matrix Theory.
 
  In recent years, significant progress has been made in studying the maximum size of $L$-functions belonging to the Unitary family, for example, the Riemann zeta function and Dirichlet L-functions, described below. This article focuses on the large values of $L$-functions of Symplectic symmetry given in subsection 
\ref{Symplectic family}. Studying the maximum size of $L$-functions of Orthogonal symmetry listed in \cite{CF} is also interesting.  
 \subsection{Unitary family} Here and throughout this paper, for any $j\geq 2$, $\ln_j$ and $\log_j$ denote the $j$-th iterated natural logarithm and logarithm base $q$ respectively with $q\geq 3$. After the work of Balasubramanian and Ramachandra \cite{BR}, the main development of a method called the ``resonance method" due to Soundararajan \cite{Sound}, which connects the problem to a general maximization problem of a quadratic form. Recently, using long resonators, introduced in \cite{Ais}, Bondarenko and Seip \cite{BS} obtained an improved $\Omega$-result for the Riemann zeta function on the critical line. This long resonator technique has been applied to the other region of the critical strip to obtain many interesting results for the unitary family $\{\zeta(\sigma+it): \, t\geq 0\}$, and $\{L(\sigma, \chi):\, \chi \text{ is primitive Dirichlet character} \pmod q\}$ (see \cite{AMM}, \cite{AMMP}, \cite{BS3}). More precisely, for the zeta function, it is known \cite{BS} that 
 \[
 \max_{0\leq t\leq T}|\zeta(1/2+it)|\geq \exp\left((1+o(1))\sqrt{\frac{\ln T \ln_3 T}{\ln_2 T}}\right).
 \]
  Later the above result was improved and optimized by de la Bret\'{e}che and Tenenbaum \cite{BT} with a constant $\sqrt{2}$ instead of $1$ inside the exponent. Also, they showed that 
  \begin{align*}\label{large values of l functions}
  \max_{\substack{\chi\in X_q^+\\ \chi \neq \chi_0}} |L(1/2, \chi)|\geq \exp\left((1+o(1))\sqrt{\frac{\ln q \ln_3 q}{\ln_2 q}}\right),
  \end{align*} 
  where $X_q^+$ is the set of all even characters modulo $q$.
  The main ingredients of the long resonator method are the full orthogonality of the family, positivity and sparsity of the resonator. 
 In the last decades, parallel to the number fields, people are also interested to understand the behaviour of such functions over function fields. Recently, Doki\'{c} et al. \cite{DOKIC} studied the large values for the unitary family $L(\sigma, \chi)$, where $\chi$ varies over Dirichlet character modulo $Q\in \mathbb{F}_q[x]$ and $\sigma\in [1/2, 1]$. Their method is inspired by \cite{BT} via full GCD sums and obtained that as $d(Q)\to \infty$,
  \[
  \max_{\substack{\chi \pmod Q\\ \chi \text{ even }\\ \chi\neq \chi_0}}|L(1/2, \chi)|\geq \exp\left(\left(\frac{1}{2}\sqrt{\frac{\sqrt{q}+1}{\sqrt{q}-1}\ln q}+o(1)\right)\sqrt{\frac{\ln |Q| \ln_3 |Q|}{\ln_2 |Q|}}\right),
  \]
  where $Q$ is an irreducible polynomial and $|Q|=q^{\deg(Q)}$.
   \subsection{Symplectic family}\label{Symplectic family}
 An important $L$-function of symplectic type is the quadratic Dirichlet $L$-function $L(1/2, \chi_d)$ with fundamental discriminant $d$. Using resonator technique, Soundararajan \cite{Sound} showed that for sufficiently large $x$,
 \[
 \max_{x\leq |d|\leq 2x}L(1/2, \chi_d)\geq \exp\left(\left(\frac{1}{\sqrt{5}}+o(1)\right)\sqrt{\frac{\ln x}{\ln_2 x}}\right).
 \] 
  \noindent
 In \cite{AMMP}, C. Aistleitner et al. mentioned that it is challenging to implement a long resonator to detect large values of $L(\sigma, \chi_d)$  as $d$ varies over all fundamental discriminants in a range $|d|\leq x$, since the orthogonality between these characters are much more subtle and loss the positivity property. A similar level of difficulty applies for the subfamily $\{L(1/2, \chi_p): p\leq x\}$. 
 
  
\vspace{2mm}
\noindent
  \textbf{Large values of $L$-functions over irreducible polynomials:}\label{large values section}
 Our main goal is to study a function field analog of the above symplectic family.
 Before we enunciate the main theorem of this paper, we need to present
 some basic notations on function fields.
  Let $\mathbb{F}_{q}$ be a finite field of odd cardinality and $\mathbb{F}_{q}[t]$ be the polynomial ring over $\mathbb{F}_{q}$ in variable $t$. Let $P\in \mathbb{F}_q[t]$ be a monic irreducible polynomial. The quadratic character $\chi_P$ attached  to $P$ is defined using quadratic residue symbol for $\mathbb{F}_{q}[t]$ by $\chi_{P}(f)=\left(\frac{f}{P}\right)$ and the corresponding Dirichlet $L$-function is denoted by $L(s, \chi_P)$. It is often convenient to work with the equivalent $L$-function $\mathcal{L}(u, \chi_P)$ written in terms of the variable $u=q^{-s}$.
 Let $\mathcal{P}_n$
 be the family of all curves given in affine form by an equation $C_P: y^2=P(t)$, where $P$ be an irreducible polynomial of degree $n$. These curve are non-singular of genus $g$ given by \eqref{lambda} and \eqref{genus}.

  In \cite{Lumley} and \cite{Lumley2}, Lumley studied the distribution of $L(\sigma, \chi_D)$ with $D$ varies over $\mathcal{H}_{2g+1}$ and $\sigma\in (1/2, 1]$, where $\mathcal{H}_{2g+1}$ is a set of all monic square-free polynomial of odd degree $2g+1$. 
Later, the first author and Lumley \cite{DL} extend the distribution result to the half line by proving Selberg's central limit theorem under specific condition.

The hard part is to obtain large values of $L(1/2, \chi_D), D\in \mathcal{H}_{n}$ on the level of Bondarenko--Seip bound. The main object of this paper is to study large values of the subfamily $\{L(1/2, \chi_P)\}_{P\in \mathcal{P}_{2g+1}}$.
 We have the following theorem.    
 \begin{theorem}\label{main theorem}
 	Let $q\geq 3$ be fixed. As $g\to \infty$, for any $\epsilon\in (0, 1/2)$, we have
 	\[
 	\max_{P\in \mathcal{P}_{2g+1}}|L(1/2, \chi_P)|\gg \exp\left(\left( \sqrt{\left(1/2-\epsilon\right)\ln q}+o(1)\right)\sqrt{\frac{g \ln_2 g}{\ln g}}\right).
 	\]
 	\end{theorem}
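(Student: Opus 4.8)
The plan is to carry out the long-resonator method of Bondarenko--Seip \cite{BS} and de la Bret\'eche--Tenenbaum \cite{BT} in $\mathbb{F}_q[t]$, reducing the problem to an extremal estimate for a GCD sum over monic squarefree polynomials. Since $P$ has odd degree $2g+1$, the $L$-polynomial $\mathcal{L}(u,\chi_P)=\sum_{n\ge 0}A_n(P)u^{n}$, with $A_n(P)=\sum_{\deg f=n}\chi_P(f)$ (inner sum over monic $f$), has degree $2g$ and satisfies the functional equation $\mathcal{L}(u,\chi_P)=(qu^{2})^{g}\mathcal{L}\!\left(1/(qu),\chi_P\right)$ with root number $+1$ (the symplectic symmetry), so $A_{2g-n}(P)=q^{g-n}A_n(P)$; evaluating at $u=q^{-1/2}$ and folding yields the exact identity
\[
L(1/2,\chi_P)=2\sum_{\substack{f\ \mathrm{monic}\\ \deg f\le g}}\frac{\chi_P(f)}{|f|^{1/2}}-\frac{1}{q^{g/2}}\sum_{\substack{f\ \mathrm{monic}\\ \deg f=g}}\chi_P(f).
\]
For any finite set $\mathcal{S}$ of monic \emph{squarefree} polynomials and any $r\colon\mathcal{S}\to[0,\infty)$, set $R(P)=\sum_{f\in\mathcal{S}}r(f)\chi_P(f)$, $M_1=\sum_{P\in\mathcal{P}_{2g+1}}L(1/2,\chi_P)|R(P)|^{2}$, $M_2=\sum_{P\in\mathcal{P}_{2g+1}}|R(P)|^{2}$. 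As $|R(P)|^{2}\ge 0$ and $L(1/2,\chi_P)\le\max_{P}L(1/2,\chi_P)$ --- individual positivity, which in fact holds by the Riemann hypothesis for $C_P$, is not needed --- one gets $\max_{P\in\mathcal{P}_{2g+1}}|L(1/2,\chi_P)|\ge M_1/M_2$, so it suffices to choose $\mathcal{S},r$ making $M_1/M_2$ large.

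Expanding $|R(P)|^{2}=\sum_{f,h\in\mathcal{S}}r(f)r(h)\chi_P(fh)$ turns $M_1,M_2$ into combinations of $\Sigma(m):=\sum_{P\in\mathcal{P}_{2g+1}}\chi_P(m)$. Quadratic reciprocity in $\mathbb{F}_q[t]$ gives $\chi_P(m)=\pm\big(\tfrac{P}{m}\big)$, and $P\mapsto\big(\tfrac{P}{m}\big)$ is a Dirichlet character modulo $\mathrm{rad}(m)$, trivial exactly when $m$ is a perfect square and --- as $\deg P=2g+1$ exceeds every $\deg m$ that occurs --- nonvanishing on $\mathcal{P}_{2g+1}$; the prime polynomial theorem for $\mathbb{F}_q[t]$ (i.e.\ Weil's bound on the zeros of $L\big(u,\big(\tfrac{\cdot}{m}\big)\big)$) then yields $\Sigma(m)=\mathbf{1}_{m=\square}\,\#\mathcal{P}_{2g+1}+O\big(\deg(m)\,q^{g+1/2}\big)$. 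Since $\mathcal{S}$ is squarefree, $fh$ is a square iff $f=h$, while in $M_1$ the condition $nfh=\square$ forces $n=f'h'k^{2}$ with $f'=f/\gcd(f,h)$, $h'=h/\gcd(f,h)$, $k$ monic, so summing $|n|^{-1/2}$ over $\deg n\le g$ produces the GCD kernel $|\gcd(f,h)|\,|f|^{-1/2}|h|^{-1/2}$ times $\big\lfloor\tfrac{g-\deg(f'h')}{2}\big\rfloor+1$ (and $0$ once $\deg(f'h')>g$). Writing $D=\max_{f\in\mathcal{S}}\deg f$ and $E=\big(\sum_{f}r(f)\big)^{2}\big/\sum_{f}r(f)^{2}$ (the ``effective length''), the main terms dominate the error terms as soon as $(g+D)\,E\ll q^{g/2}$ --- the binding constraint, coming from $M_1$, whose error carries the extra factor $\sum_{\deg n\le g}|n|^{-1/2}\asymp q^{g/2}$ --- and then
\[
\frac{M_1}{M_2}\ \gg\ g\cdot\frac{\displaystyle\sum_{\substack{f,h\in\mathcal{S}\\ \deg(f'h')\le g/2}}r(f)r(h)\,\frac{|\gcd(f,h)|}{|f|^{1/2}|h|^{1/2}}}{\displaystyle\sum_{f\in\mathcal{S}}r(f)^{2}}.
\]

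It remains to choose $\mathcal{S},r$ making this GCD-sum ratio large subject to $(g+D)E\ll q^{g/2}$; this is the $\mathbb{F}_q[t]$-transcription of the extremal problem solved in \cite{BS},\cite{BT}. Take $r$ multiplicative, $r(f)=\prod_{\ell\mid f}\rho(\ell)$, supported on the squarefree products of at most $k$ monic irreducibles $\ell$ from a set $\mathcal{Q}$ whose degrees lie in a window tuned to $g$, with $\rho(\cdot)$ and $k$ chosen as in the classical long resonator; the truncation to $\le k$ factors is lossless up to a factor $e^{o(\cdot)}$ by a Rankin-type bound, $D$ stays $O(g\log g)$, and (since GCD sums are governed by their near-diagonal pairs) restricting to $\deg(f'h')\le g/2$ costs only an $e^{o(\cdot)}$, so the effective constraint is just $\ln E\le(1/2-\epsilon)g\ln q$. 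For such $r$ the untruncated ratio factors as $\prod_{\ell\in\mathcal{Q}}\big(1+\tfrac{2\rho(\ell)|\ell|^{-1/2}}{1+\rho(\ell)^{2}}\big)$ and $E=\prod_{\ell\in\mathcal{Q}}\big(1+\tfrac{2\rho(\ell)}{1+\rho(\ell)^{2}}\big)$, and optimizing the former under $\ln E\le(1/2-\epsilon)g\ln q$, exactly as in \cite{BS} (with ``monic irreducibles of degree $\le b$'' in the role of ``primes $\le q^{b}$''), yields a ratio at least $\exp\!\big((1+o(1))\sqrt{\ln E\cdot\ln_3 E/\ln_2 E}\big)$; since $\ln E=(1/2-\epsilon)g\ln q$ gives $\ln_2 E\sim\ln g$ and $\ln_3 E\sim\ln_2 g$, this is $\exp\!\big((\sqrt{(1/2-\epsilon)\ln q}+o(1))\sqrt{g\ln_2 g/\ln g}\big)$. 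Feeding this into $M_1/M_2$ and absorbing the extra polynomial factor $g=\exp(o(\sqrt{g\ln_2 g/\ln g}))$ finishes the proof.

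The step I expect to be most delicate is the error analysis above. The symplectic family has no self-orthogonality beyond what reciprocity and the Weil bound provide, and the approximate functional equation for $L(1/2,\chi_P)$ has length $q^{g/2}$ rather than $q^{g}$; together these cap the admissible effective resonator length at $\approx q^{g/2}$, and it is exactly this cap that produces the constant $\sqrt{(1/2-\epsilon)\ln q}$ --- the factor $\tfrac12$ --- rather than a larger one, with $\epsilon$ absorbing the $\deg(m)$ loss in $\Sigma(m)$, the factor $g+D$, the restriction to near-diagonal pairs, the Rankin truncation, and the other $o(1)$'s. By contrast the GCD-sum construction itself is a routine transcription of \cite{BS},\cite{BT}, the only substantive change being that ``primes up to $y$'' becomes ``monic irreducible polynomials of degree up to $\log_q y$''.
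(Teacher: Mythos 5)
Your proposal is correct and follows essentially the same route as the paper: the resonance quotient $\mathcal{S}_2/\mathcal{S}_1$ with a Bondarenko--Seip long resonator, the functional-equation identity at $s=1/2$ (your folded formula is exactly the paper's approximate functional equation for odd $d(P)$, $\lambda=0$), the square/non-square splitting of $\sum_{P\in\mathcal{P}_{2g+1}}\chi_P(\cdot)$ handled by the Prime Polynomial Theorem and the Weil bound, and the resulting cap of the resonator length at roughly $q^{(1/2-\epsilon)g}$, which is where the constant $\sqrt{(1/2-\epsilon)\ln q}$ comes from. The only cosmetic difference is that you retain the full GCD kernel (summing over $n=f'h'k^2$ and then restricting to near-diagonal pairs), whereas the paper lower-bounds the positive square part by the sub-GCD terms $fh_1=h_2$ and invokes its Lemmas 2--5 for the same BS-type extremal estimate; both give the same bound.
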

 
 \begin{remark}
 In \cite{FGH}, Farmer, Gonek and Hughes conjectured that as $n\to \infty$,
 	\[
 	\max_{P\in \mathcal{P}_{n}}|L(1/2, \chi_P)|=\exp\left( \left(\sqrt{\ln q}+o(1)\right)\sqrt{n \ln n}\right),
 	\]
 	which is a particular case of Conjecture $C$. It seems very difficult to find large values of the subfamily $\{L(1/2, \chi_P)\}_{P\in \mathcal{P}_{2g+2}}$ as same strength of Theorem \ref{main theorem} due to lack of positivity to the extra $\lambda$-terms coming from the approximate functional equation (see Lemma \ref{Approximate Functional Equation}). 
 	\end{remark}

 \subsection{Essence of the paper} We adapt the resonance method in the function fields inspired by \cite{BS} to get sub-GCD sum instead of full GCD sum as in \cite{BT}. Also, one needs to analyze $\sum_{P\in \mathcal{P}_{2g+1}}L(1/2, \chi_P)|R(\chi_P)|^2$ instead of $\sum_{P\in \mathcal{P}_{2g+1}}L(1/2, \chi_P)^2|R(\chi_P)|^2$, where $R(\chi_P)$ is called resonator to be optimized. We have to go with the first moment to the $L$-function because the non-square contribution of the characters sum becomes large to the second moment.
We also restrict the sum over irreducible polynomials since the non-square contribution over irreducibles is significantly small and not affected by the long resonator, which is the main obstructer to the number fields case. Unlike Soundararajan's method, the Bondarenko-Seip approach achieves superior large values by incorporating the innovative use of a  ``long resonator".

 So, the main idea is to break the sum $\sum_{P\in \mathcal{P}_{2g+1}}L(1/2, \chi_P)|R(\chi_P)|^2$ into a square and non-square parts, coming from the character sum. Then we apply positivity to the term that comes from the square contribution, and the character sum over irreducible polynomials will control the non-square part, since $L(s, \chi_P)$ is a polynomial of degree $2g$ and hence roots are handled via Weil's theorem (Riemann hypothesis over algebraic curves). This phenomenon does not happen for the family $L(1/2, \chi_p)$, when $p$ varies over fundamental discriminants. Hence, it is a symplectic family where the $L$-function exhibits large values on the Bondarenko--Seip type bound level.
 
One can achieve the same result for the subfamily $\{L(1/2, \chi_p): p\leq x\}$ by adapting the above method under the conditional assumption of the Grand Riemann Hypothesis (GRH). The GRH plays a crucial role in mitigating the impact of large prime numbers on error terms arising from both square and non-square elements. Additionally, it is essential to adeptly manage the smooth function within the approximate functional equation. The use of function fields offers a distinct advantage, since $L(s, \chi_D)$ behaves as a polynomial and benefits from the application of Weil bound.    
 
 \section{Preliminaries and Lemmas} 
\subsection{Basic facts on $\mathbb{F}_q[t]$} We start by fixing a finite field $\mathbb{F}_{q}$ of odd cardinality $q=p^r$, $r\geq 1$ with an odd prime $p$. Let $\mathbb{A}:=\mathbb{F}_{q}[t]$ be the polynomial ring over $\mathbb{F}_{q}$.
For a polynomial $f$ in $\mathbb{F}_{q}[t]$, its degree is denoted by either $\deg(f)$ or $d(f)$. 

\vspace{1mm}
\noindent 
The set of all monic polynomials and monic irreducible polynomials of degree $n$ are denoted by $\mathcal{M}_{n,q}$ (or simply $\mathcal{M}_{n}$ as we fix $q$) and $\mathcal{P}_{n, q}$ (or simply $\mathcal{P}_{n}$) respectively. Let $\mathcal{M}=\cup_{n\geq 1} \mathcal{M}_{n}$ and $\mathcal{P}=\cup_{n\geq 1} \mathcal{P}_{n}$. We also denote the set of all monic polynomials of degree less or equal to $n$ by $\mathcal{M}_{\leq n}$.


If $f$ is a non-zero polynomial $\mathbb{F}_{q}[t]$ then the norm of $f$ is defined by $|f|=q^{d(f)}$. If $f=0$, we set $|f|=0$.
The Prime Polynomial Theorem (see \cite{ROS}, Theorem $2.2$) states that 
\begin{align}\label{prime poly th}
|\mathcal{P}_{n}|=\frac{q^{n}}{n} \,\,+\,\,O\Big(\frac{q^{\frac{n}{2}}}{n}\Big).
\end{align}
\noindent
The zeta function of $\mathbb{A}$, denoted by $\zeta_{\mathbb{A}}(s)$ and is defined by $$\zeta_{\mathbb{A}}(s):=\displaystyle\sum_{f\in \mathcal{M}}\frac{1}{|f|^{s}}=\displaystyle\prod_{P\in \mathcal{P}} \left( 1- |P|^{-s} \right)^{-1},\quad\quad  \Re(s)>1.$$ One can easily prove that $\zeta_{\mathbb{A}}(s)=\frac{1}{1-q^{1-s}}$, and this provides an analytic continuation of zeta function to the complex plane with a simple pole at $s=1$. Using the change of variable $u=q^{-s}$, 
\begin{align*}
\mathcal{Z}(u)=\sum_{f\in \mathcal{M}}u^{d(f)}=\frac{1}{1-qu}, \quad \text{ if } |u|<\frac{1}{q}.
\end{align*} 
\noindent
\subsection{Quadratic Dirichlet character and properties of their $L$-functions} For a monic irreducible polynomial $P$, the quadratic residue symbol $\left(\frac{f}{P} \right)$ is defined by 
\begin{align*}
\left( \frac{f}{P}\right)= 
\left\{
\begin{array}
[c]{ll}
1 & \text{if\, $f$ is a square\;} (\text{mod}\,\,  P),\,\, P\nmid f \\
-1 & \text{if\, $f$ is not a square\;} (\text{mod}\,\, P),\,\, P\nmid f\\
0 & \text{if \; } P\mid f.
\end{array}
\right.
\end{align*}
The quadratic Dirichlet character $\chi_{P}$ is defined by $\chi_{P}(f)= \left(\frac{f}{P} \right)$ and the associated $L$-function by
\begin{align*}
L(s,\chi_{P})=\sum_{f\in \mathcal{M}} \frac{\chi_{P}(f)}{|f|^{s}}=\prod_{Q\in \mathcal{P}}\left(1-\chi_{P}(Q)\,|Q|^{-s} \right)^{-1},\;\; \Re(s)>1.
\end{align*} 
Using the change of variable $u=q^{-s}$, we have  
\begin{align*}
\mathcal{L}(u,\chi_{P})=\sum_{f\in \mathcal{M} } \chi_{P}(f)\, u^{d(f)}=\prod_{Q\in\mathcal{P}}\left(1-\chi_{P}(Q)\,u^{d(Q)} \right)^{-1},\;\;\, |u|<\frac{1}{q} .
\end{align*}
\noindent
By [\cite{ROS}, Proposition $4.3$], we see that if $n\geq d(P)$ then $$\sum_{f\in \mathcal{M}_{n}}\chi_{P}(f)=0 .$$ It implies that $\mathcal{L}(u,\chi_{P})$ is a polynomial of degree at most $d(P)-1$. From \cite{Rud}, $\mathcal{L}(u, \chi_P)$ has a trivial zero at $u=1$ if and only if $d(P)$ is even. This allows us to define the completed $L$-function as
\[
L(s,\chi_{P})=\mathcal{L}(u, \chi_P)=(1-u)^{\lambda}\mathcal{L}^{*}(u, \chi_P)=(1-q^{-s})^{\lambda}{L}^{*}(s, \chi_P),
\]
where 
\begin{align}\label{lambda}
\lambda= 
\left\{
\begin{array}
[c]{ll}
1, & \;\text{if\, $d(P)$ even}, \\
0, &\; \text{if\, $d(P)$ odd},
\end{array}
\right.
\end{align}
and $\mathcal{L}^*(u, \chi_P)$ is a polynomial of degree 
\begin{align}\label{genus}
2g=d(P)-1-\lambda
\end{align}
satisfying the functional equation 
\[
\mathcal{L}^*(u, \chi_P)=(qu^2)^{g}\mathcal{L}^*\left(\frac{1}{qu}, \chi_P\right).
\] 
The Riemann hypothesis for curve over finite fields, established by Weil \cite{WEIL}, asserts that all the non-trivial zero of $\mathcal{L}^{*}(u, \chi_{P})$ are lie on the circle $|u|=q^{-1/2}$, i.e.,
\[ 
\mathcal{L}^{*}(u, \chi_{P})=\prod_{j=1}^{2g}\left(1-u \, \nu_{j}\, \right)\;\; \text{with}\;\, |\nu_{j}|=\sqrt{q}\;\, \text{for all}\,\, j.  
\]

\subsection{The resonator over function fields}
For a fixed $P\in \mathcal{P}_{2g+1}$, the resonator is a Dirichlet polynomial defined by
\[
R(\chi_P):= \sum_{h\in \mathcal{R}}\psi(h)\chi_P(h)
\]
with $|\mathcal{R}|\leq q^{\theta g}$ for some $\theta\leq 1/2$ such a way that the coefficients $\psi(h)$ are positive which resonates with $L(1/2, \chi_P)$, where $\mathcal{R}$ is constructed below. 
\begin{remark}
	We have simplified resonator compare to \cite{BS} and \cite{BT}. They needed to construct a subset $\mathcal{R}'\subset \mathcal{R}$ which control local factors of the form $\log (m/n)$ for the zeta and certain congruence classes for Dirichlet L-function in conductor aspect. In our case, $\mathcal{R}'= \mathcal{R}$, since for any $l\in \mathcal{R}'$,
	\[
	r(l)^2=\sum_{\substack{h\in \mathcal{R}\\ hl=\square}}\psi(h)^2=\psi(l)^2,
	\]  
	where $r(l)$ is a positive valued function on $\mathcal{R}'$.
	\end{remark}
  
\subsubsection*{Construction of $\mathcal{R}$}
Let $N$ be a large number to be chosen later.
 Let $a\in (1, \infty)$ and $\gamma\in (0,1)$ be fixed real numbers.
We consider $\mathbb{P}$ as the set of all irreducible polynomials $P$ such that $$\log{(q\ln N \ln_{2} N)}< d(P)\le \log{(q^{(\ln_{2} N)^{\gm}}\ln N \ln_{2} N)}.$$
Let us define a multiplicative function $\psi$ supported on the set of square-free polynomials such that for any  $P\in\mathbb{P}$,
\begin{align*}
\psi(P)=\sqrt{\frac{\ln N \ln_{2}N }{\ln_{3}N}}|P|^{-1/2}\left(d(P)-\log(\ln N \ln_{2}N)\right)^{-1}.
\end{align*}
 Let $\mathbb{P}_{k}$ be the set of all irreducible polynomials $P$ such that $$\log{(q^{k}\ln N \ln_{2} N)}< d(P)\le \log{(q^{k+1}\ln N \ln_{2} N)},$$
 where $k=1,\ldots,[(\ln_{2}N)^{\gm}]$. 
Fix $1<a<\frac{1}{\gm}$. Then, let $ \mathcal{R}_{k}$ be the set of polynomials that have at least $\frac{a\ln N}{k^{2}\ln_{3}N}$ irreducibles in $\mathbb{P}_{k}$, and let $ \mathcal{R}^{\prime}_{k}$ be the set of polynomials from $\mathcal{R}_{k}$ that have irreducibles only in $\mathbb{P}_{k}$. Finally set 
\[
\mathcal{R}:=\rm{supp}(\psi)\setminus\displaystyle \cup_{k=1}^{[(\ln_{2}N)^{\gm}]}\mathcal{R}_k.
\]
In other words, $\mathcal{R}$ is the set of square-free polynomials that have at most $\frac{a\ln N}{k^{2}\ln_{3}N}$ irreducibles in each group $\mathcal{R}_{k}$.
Note that, if $h\in\mathcal{R}$ then 
\begin{align*}
	d(h)\ll \ln N \ln_{2} N . 
\end{align*}	
\begin{lemma}\label{Le1} 
	For large enough $N$ and $1<a<\frac{1}{\gm}$, we have $|\mathcal{R}|\leq N$.
\end{lemma}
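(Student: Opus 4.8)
The plan is to estimate $|\mathcal{R}|$ by a direct count. Every $h\in\mathcal{R}$ is square-free with all of its irreducible factors in $\mathbb{P}=\bigsqcup_{k=1}^{K}\mathbb{P}_k$, where $K=[(\ln_2 N)^{\gamma}]$, and $h$ lies in $\mathcal{R}$ precisely when it has at most $m_k:=\lceil\tfrac{a\ln N}{k^{2}\ln_3 N}\rceil-1$ irreducible factors in each block $\mathbb{P}_k$. Since a square-free polynomial is determined by its unordered set of irreducible factors, and these factors are partitioned among the disjoint $\mathbb{P}_k$, this yields the exact identity
\[
|\mathcal{R}|=\prod_{k=1}^{K}\ \sum_{j=0}^{m_k}\binom{|\mathbb{P}_k|}{j},
\]
so it suffices to bound $\ln|\mathcal{R}|=\sum_{k\le K}\ln\!\bigl(\sum_{j\le m_k}\binom{|\mathbb{P}_k|}{j}\bigr)$ by $\ln N$. (The estimate $d(h)\ll\ln N\ln_2 N$ recorded just after the construction is automatic here and will not be needed.)

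Next I would determine the size of $\mathbb{P}_k$. The half-open interval $\bigl(\log_q(q^{k}\ln N\ln_2 N),\,\log_q(q^{k+1}\ln N\ln_2 N)\bigr]$ has length $1$, hence contains a unique integer $n_k=k+\log_q(\ln N\ln_2 N)+O(1)$, so $\mathbb{P}_k=\mathcal{P}_{n_k}$ with $n_k=(1+o(1))\ln_2 N/\ln q$ for $k\le K$. The Prime Polynomial Theorem \eqref{prime poly th} then gives $|\mathbb{P}_k|=q^{n_k}/n_k+O(q^{n_k/2}/n_k)$, whence $\ln|\mathbb{P}_k|=n_k\ln q-\ln n_k+o(1)=k\ln q+\ln_2 N+O_q(1)$. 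On the other hand $\ln m_k=\ln_2 N-2\ln k-\ln_4 N+O(1)$, so the large term $\ln_2 N$ cancels and
\[
\ln\!\Bigl(\frac{e\,|\mathbb{P}_k|}{m_k}\Bigr)=k\ln q+2\ln k+\ln_4 N+O_q(1).
\]
Since $m_k=o(|\mathbb{P}_k|)$ one has $\sum_{j\le m_k}\binom{|\mathbb{P}_k|}{j}\le 2\binom{|\mathbb{P}_k|}{m_k}\le 2\bigl(e|\mathbb{P}_k|/m_k\bigr)^{m_k}$, and hence $\ln|\mathcal{R}|\le K\ln 2+\sum_{k\le K}m_k\bigl(k\ln q+2\ln k+\ln_4 N+O_q(1)\bigr)$.

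The final step is the summation over $k$. The dominant contribution comes from $\sum_{k\le K}m_k\cdot k\ln q$, and using $m_k=(1+o(1))\tfrac{a\ln N}{k^{2}\ln_3 N}$ together with $\sum_{k\le K}\tfrac1k=\ln K+O(1)=\gamma\ln_3 N+O(1)$ this equals $(1+o(1))\,a\gamma\,\ln N$; all the remaining pieces are $o(\ln N)$, namely $\sum_k m_k\ln k\ll\tfrac{\ln N}{\ln_3 N}\sum_k\tfrac{\ln k}{k^{2}}=O(\tfrac{\ln N}{\ln_3 N})$, $\sum_k m_k\ln_4 N\ll\tfrac{\ln N\ln_4 N}{\ln_3 N}=o(\ln N)$, $\sum_k m_k\,O_q(1)=O(\tfrac{\ln N}{\ln_3 N})$, and $K\ln 2=O((\ln_2 N)^{\gamma})=o(\ln N)$. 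Thus $\ln|\mathcal{R}|\le(a\gamma+o(1))\ln N$, and since $1<a<1/\gamma$ forces $a\gamma<1$, we get $\ln|\mathcal{R}|<\ln N$ for every sufficiently large $N$, i.e. $|\mathcal{R}|\le N$. I expect the main obstacle to lie in this last paragraph: one has to confirm that the coefficient of the leading term is exactly $a\gamma$ — this is where the careful bookkeeping of the mixture of natural and base-$q$ logarithms appearing in $\psi$, in the ranges defining the $\mathbb{P}_k$, and in the threshold defining the $\mathcal{R}_k$ is essential — and that none of the lower-order terms (from $\ln k$, from $\ln_4 N$, from the crude inequality $\sum_{j\le m}\binom nj\le2\binom nm$, or from the error term in the Prime Polynomial Theorem) is large enough to break the strict inequality $a\gamma<1$.
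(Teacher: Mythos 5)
Your overall strategy is exactly the paper's: bound $|\mathcal{R}|$ by $\prod_{k\le K}\sum_{j\le m_k}\binom{|\mathbb{P}_k|}{j}$, control $|\mathbb{P}_k|$ via the Prime Polynomial Theorem ($|\mathbb{P}_k|\ll_q q^{k+1}\ln N$, as in the paper), estimate each binomial sum by $\bigl(e|\mathbb{P}_k|/m_k\bigr)^{m_k}$, and sum over $k$; the paper compresses the last two steps by citing Lemma 2 of \cite{BS2}. The gap is in your final paragraph, at exactly the point you flagged as the crux. By your own display, $\ln\bigl(e|\mathbb{P}_k|/m_k\bigr)=k\ln q+2\ln k+\ln_4 N+O_q(1)$, so the dominant term is
\[
\ln q\sum_{k\le K}k\,m_k=(1+o(1))\,a\gamma\,(\ln q)\,\ln N,
\]
not $(1+o(1))\,a\gamma\ln N$: you silently dropped the constant factor $\ln q$. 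The argument as written therefore yields only $|\mathcal{R}|\le N^{a\gamma\ln q+o(1)}$, and since $q\ge 3$ gives $\ln q>1$, the hypothesis $1<a<1/\gamma$ (i.e.\ $a\gamma<1$) does not force this exponent below $1$; for instance $q=3$, $\gamma=0.95$, $a=1.01$ gives $a\gamma\ln q>1$.

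Nor can the loss be repaired by sharper bookkeeping of the binomial sums: since $m_k=o(|\mathbb{P}_k|)$, $\binom{n}{m}\ge (n/m)^m$, and $|\mathbb{P}_k|\gg_q q^{k}\ln N$, the same computation gives the matching lower bound $\ln|\mathcal{R}|\ge (1+o(1))\,a\gamma\ln q\,\ln N$, so under the bare hypothesis $1<a<1/\gamma$ the desired inequality does not follow from this count; one needs $a\gamma\ln q\le 1$, or equivalently a threshold in the definition of $\mathcal{R}_k$ measured in base-$q$ logarithms (in the integer setting of \cite{BS2} the blocks grow by factors of $e$, which is why no such factor appears there). Note that the paper's own proof writes $|\mathcal{R}|\le\exp\bigl(\tfrac{a\ln N}{\ln_3 N}\sum_{k\le K}\tfrac1k\bigr)$ ``following the arguments of Lemma 2 in \cite{BS2}'', i.e.\ it suppresses the same factor of $\ln q$; so your computation, carried out honestly, does not close the proof of the lemma as stated but instead isolates precisely the step where this constant must be accounted for (by shrinking $a\gamma$ below $1/\ln q$ or rescaling the construction).
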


\begin{proof}
	From the construction of $\mathcal{R}$, we have
	\[
	|\mathcal{R}|\leq \prod_{k=1}^{\lfloor(\ln_{2}N)^{\gm}\rfloor}\sum_{j=0}^{\lfloor\frac{a\ln N}{k^{2}\ln_{3}N}\rfloor}\binom{|\mathbb{P}_k|}{j}.
		\]
		By the Prime Polynomial Theorem, we get
		$
		|\mathbb{P}_k|\leq \zeta_{\mathbb{A}}(2)q^{k+1} \ln N.
		$
		Therefore, following the arguments of Lemma 2 in \cite{BS2}, we have
		\[
		|\mathcal{R}|\leq \exp\left(\frac{a\ln N}{\ln_3 N}\sum_{k=1}^{\lfloor(\ln_{2}N)^{\gm}\rfloor}\frac1k\right)\leq \exp\left(a\gamma\ln N\right)\leq N.
		\]
		\end{proof}
 Let us define 
\begin{align*}
	\mcal{A}_{N}:=\frac{1}{\sum_{h\in\mcal{M}} \psi(h)^{2}}\sum_{f\in\mcal{M}}\frac{\psi{(f)}}{\sqrt{|f|}}\sum_{g|f}\psi(g)\sqrt{|g|}.
\end{align*}
Since $\psi$ is a multiplicative function, we obtain 
\begin{align}\label{eq2}
	\mcal{A}_{N}=\prod_{P\in\mathbb{P}}\frac{1+\psi(P)^2 +\psi(P)|P|^{-1/2}}{1+\psi(P)^2}.
\end{align}

\begin{lemma}\label{Le2}
As $N\to\infty$, we have
\[\mcal{A}_{N}\ge \exp\left((\gm + o(1))\sqrt{\frac{\ln N\ln_{3}N}{\ln_{2}N}}\right).
\]	
\end{lemma}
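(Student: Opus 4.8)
\medskip
\noindent\emph{Proof plan.} The plan is to pass to logarithms in the Euler product \eqref{eq2} and then linearize. Put $u_P:=\psi(P)|P|^{-1/2}\big/\bigl(1+\psi(P)^2\bigr)>0$, so that $\ln\mcal A_N=\sum_{P\in\mathbb P}\ln(1+u_P)$. First I would observe that, by the construction of $\mathbb P$, every $P\in\mathbb P$ satisfies $|P|\gg\ln N\ln_2 N$ and $d(P)-\log(\ln N\ln_2 N)\gg 1$; hence $\psi(P)\ll(\ln_3 N)^{-1/2}=o(1)$ uniformly on $\mathbb P$, and so $u_P=o(1)$ uniformly. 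Using $\ln(1+y)\ge y-y^2$ for $y\ge 0$ together with $\psi(P)^2=o(1)$, this yields
\[
\ln\mcal A_N\ \ge\ (1+o(1))\sum_{P\in\mathbb P}\psi(P)|P|^{-1/2},
\]
and it remains to bound the right-hand sum below by $(\gm+o(1))\sqrt{\ln N\ln_3 N/\ln_2 N}$.

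Next I would insert the definition of $\psi$ and write $L:=\log(\ln N\ln_2 N)$, so that
\[
\sum_{P\in\mathbb P}\psi(P)|P|^{-1/2}=\sqrt{\frac{\ln N\ln_2 N}{\ln_3 N}}\ \sum_{P\in\mathbb P}\frac{1}{|P|\,(d(P)-L)},
\]
and then organize the inner sum by the degree $n=d(P)$, which runs over the integers of the window $\bigl(\log(q\ln N\ln_2 N),\,\log(q^{(\ln_2 N)^{\gm}}\ln N\ln_2 N)\bigr]$ --- an interval of length $\asymp(\ln_2 N)^{\gm}$ with left endpoint $L+O(1)$, and $L=(1+o(1))\ln_2 N$. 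For each such $n$ the Prime Polynomial Theorem \eqref{prime poly th} gives $\sum_{P\in\mcal P_n}|P|^{-1}=q^{-n}|\mcal P_n|=\tfrac1n+O(q^{-n/2}/n)$, and the error terms, summed over $n$, form a geometrically convergent series, negligible against the main term; hence
\[
\sum_{P\in\mathbb P}\frac{1}{|P|\,(d(P)-L)}=(1+o(1))\sum_{n}\frac{1}{n\,(n-L)},
\]
the sum over $n$ in the same window.

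The final step is to estimate $\sum_{n}\frac1{n(n-L)}$. Since $\gm<1$, the window has length $(\ln_2 N)^{\gm}=o(\ln_2 N)=o(L)$, so every $n$ in it equals $L(1+o(1))$; thus $\tfrac1n=\tfrac{1+o(1)}{L}$ uniformly, while as $n$ runs through consecutive integers, $n-L$ runs through an arithmetic progression of step $1$ with terms between $\asymp 1$ and $\asymp(\ln_2 N)^{\gm}$, whose reciprocals sum to $\ln\bigl((\ln_2 N)^{\gm}\bigr)+O(1)=\gm\ln_3 N+O(1)$. Hence $\sum_{n}\frac1{n(n-L)}=(\gm+o(1))\,\ln_3 N/\ln_2 N$, and combining with the displays above,
\[
\ln\mcal A_N\ \ge\ (1+o(1))\sqrt{\frac{\ln N\ln_2 N}{\ln_3 N}}\cdot(\gm+o(1))\,\frac{\ln_3 N}{\ln_2 N}=(\gm+o(1))\sqrt{\frac{\ln N\ln_3 N}{\ln_2 N}},
\]
which is the assertion. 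The one step I expect to require care is this last one: one has to keep $n$ comparable to $L$ throughout the degree window (which is precisely where the hypothesis $\gm<1$ is used) and to account for the fractional part of $L$ in passing from the count of admissible integers $n$ to the harmonic-type sum $\sum(n-L)^{-1}$. The Prime Polynomial Theorem error is harmless thanks to the saving $q^{-n/2}$, and the linearization error is harmless because $\psi=o(1)$ uniformly on $\mathbb P$.
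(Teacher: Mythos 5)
Your proposal is correct and follows essentially the same route as the paper: linearize the Euler product \eqref{eq2} using the uniform bound $\psi(P)\ll(\ln_{3}N)^{-1/2}$, reduce the problem to the prime sum $\sum_{P\in\mathbb{P}}\psi(P)|P|^{-1/2}$, and evaluate it via the Prime Polynomial Theorem together with a harmonic-type sum over the degree window (the paper phrases this last step as the integral $\int\frac{dx}{x(x-c)}$ with antiderivative $\frac{\ln(x-c)-\ln x}{c}$, which is the same computation). One small correction: since $\log$ denotes the base-$q$ logarithm, $L=\log(\ln N\ln_{2}N)=(1+o(1))\frac{\ln_{2}N}{\ln q}$ rather than $(1+o(1))\ln_{2}N$, so your displayed identity $\sum_{n}\frac{1}{n(n-L)}=(\gm+o(1))\frac{\ln_{3}N}{\ln_{2}N}$ should read $(\gm\ln q+o(1))\frac{\ln_{3}N}{\ln_{2}N}$; as $\ln q>1$ this only strengthens the estimate, so the stated lower bound for $\mcal{A}_{N}$ is unaffected.
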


\begin{proof}
	Since $\psi(P)<(\ln_{3}N)^{-1/2}$ for all $P\in\mathbb{P}$, it follows from \eqref{eq2} that
	\[
	\mcal{A}_{N}=\exp\left((1+o(1))\sum_{P\in\mathbb{P}}\frac{\psi(P)}{\sqrt{|P|}}\right).
	\]
	Now, \begin{align*}
		\sum_{P\in\mathbb{P}}\frac{\psi(P)}{\sqrt{|P|}}
		&=\sqrt{\frac{\ln N \ln_{2}N }{\ln_{3}N}}\sum_{P\in\mathbb{P}}|P|^{-1} \left(d(P)-\log(\ln N \ln_{2}N)\right)^{-1}\\
		& =\sqrt{\frac{\ln N \ln_{2}N }{\ln_{3}N}}\sum_{\log{(q\ln N \ln_{2} N)}<m\le\log{(q^{(\ln_2 N)^{\gm}}\ln N \ln_{2} N)}} \frac{1}{m\left(m-\log(\ln N\ln_{2}N)\right)}\\
		& =\left(1+o(1)\right)\sqrt{\frac{\ln N \ln_{2}N }{\ln_{3}N}} \int_{\log{(q\ln N \ln_{2} N)}}^{\log{(q^{(\ln_2 N)^{\gm}}\ln N \ln_{2} N)}}\frac{dx}{x(x-\log(\ln N\ln_{2}N))} \\
		& \ge \left(\gm +o(1)\right)\sqrt{\frac{\ln N \ln_{3}N }{\ln_{2}N}},
	\end{align*}
	since $\int \frac{1}{x(x-c)}dx =\frac{\ln (x-c)-\ln x}{c}+\text{constant}$, and $\log A= \frac{\ln A}{\ln q}$.
	\end{proof}  
  
  \begin{lemma}\label{Le3}
	We have 
	\begin{align*}
	\frac{1}{\sum_{h\in\mcal{M}} \psi(h)^{2}}\sum_{f\in\mcal{M};\,  f\notin\mathcal{R}}\frac{\psi{(f)}}{\sqrt{|f|}}\sum_{g|f}\psi(g)\sqrt{|g|}=o(\mcal{A}_{N}), \;\; N\to\infty.
	\end{align*}	
		
\end{lemma}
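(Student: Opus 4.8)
The plan is to adapt the Rankin (large‑deviation) argument of \cite{BS}: the polynomials $f\notin\mathcal{R}$ are precisely those squarefree $f\in\operatorname{supp}(\psi)$ having an abnormally large number of irreducible factors in some block $\mathbb{P}_k$, and I want to show these contribute a negligible amount. Since $\psi$ vanishes off its support, and (by multiplicativity of $\psi$, as in the computation leading to \eqref{eq2})
\[
\mathcal{A}_N\sum_{h\in\mathcal{M}}\psi(h)^2=\sum_{f\in\mathcal{M}}\frac{\psi(f)}{\sqrt{|f|}}\sum_{g\mid f}\psi(g)\sqrt{|g|}=\prod_{P\in\mathbb{P}}\bigl(1+\psi(P)^2+\psi(P)|P|^{-1/2}\bigr),
\]
I first reduce, via a union bound over $k\le[(\ln_2 N)^{\gamma}]$, to estimating for each fixed $k$ the quantity
\[
B_k:=\sum_{\substack{f\in\operatorname{supp}(\psi)\\ \omega_k(f)\ge K_k}}\frac{\psi(f)}{\sqrt{|f|}}\sum_{g\mid f}\psi(g)\sqrt{|g|},\qquad K_k:=\frac{a\ln N}{k^{2}\ln_3 N},
\]
where $\omega_k(f)$ counts the irreducible factors of $f$ lying in $\mathbb{P}_k$; indeed $f\in\operatorname{supp}(\psi)\setminus\mathcal{R}$ means $\omega_k(f)\ge K_k$ for some such $k$.

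Next I would apply Rankin's inequality $\mathbf{1}[\omega_k(f)\ge K_k]\le t^{\,\omega_k(f)-K_k}$ (any $t\ge1$). Since the summand is multiplicative in $f$, the twisted sum $\sum_{f\in\operatorname{supp}(\psi)}t^{\omega_k(f)}\tfrac{\psi(f)}{\sqrt{|f|}}\sum_{g\mid f}\psi(g)\sqrt{|g|}$ factors as an Euler product over $\mathbb{P}$ in which each factor over $\mathbb{P}_k$ is $1+t(\psi(P)^2+\psi(P)|P|^{-1/2})$ and each remaining factor is $1+\psi(P)^2+\psi(P)|P|^{-1/2}$. Comparing with the product above and using $\tfrac{1+tx}{1+x}\le e^{(t-1)x}$ for $x\ge0$, $t\ge1$, yields
\[
\frac{B_k}{\mathcal{A}_N\sum_h\psi(h)^2}\le t^{-K_k}\exp\!\bigl((t-1)W_k\bigr),\qquad W_k:=\sum_{P\in\mathbb{P}_k}\bigl(\psi(P)^2+\psi(P)|P|^{-1/2}\bigr).
\]
A direct computation with the Prime Polynomial Theorem \eqref{prime poly th} — using that every $P\in\mathbb{P}_k$ has degree close to $\log(\ln N\ln_2 N)$ and $\sum_{d(P)=m}|P|^{-1}=(1+o(1))/m$ — shows $W_k=(1+o(1))\sum_{P\in\mathbb{P}_k}\psi(P)^2\asymp \ln N/(k^{2}\ln_3 N)$, and, crucially, that the constant $a$ in the definition of $\mathcal{R}_k$ is calibrated so that $W_k\le \rho^{-1}K_k$ for a fixed $\rho>1$, uniformly in $k$. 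Optimizing at $t=K_k/W_k>1$ then gives
\[
\frac{B_k}{\mathcal{A}_N\sum_h\psi(h)^2}\le \exp\!\bigl(-W_k(\rho_k\ln\rho_k-\rho_k+1)\bigr)\le\exp(-\eta_0 W_k),
\]
where $\rho_k:=K_k/W_k\ge\rho>1$ and $\eta_0:=\rho\ln\rho-\rho+1>0$.

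It remains to sum over $k$. Using the lower bound $W_k\gg \ln N/(k^{2}\ln_3 N)$ and $k\le[(\ln_2 N)^{\gamma}]$,
\[
\sum_{k=1}^{[(\ln_2 N)^{\gamma}]}\exp(-\eta_0 W_k)\ \le\ (\ln_2 N)^{\gamma}\exp\!\left(-c\,\frac{\ln N}{(\ln_2 N)^{2\gamma}\ln_3 N}\right)\ \longrightarrow\ 0\qquad(N\to\infty),
\]
since $\ln N$ outgrows every fixed power of $\ln_2 N$. Combined with the union bound, this shows $\sum_{f\notin\mathcal{R}}\tfrac{\psi(f)}{\sqrt{|f|}}\sum_{g\mid f}\psi(g)\sqrt{|g|}=o\bigl(\mathcal{A}_N\sum_h\psi(h)^2\bigr)$; dividing by $\sum_h\psi(h)^2$ gives Lemma \ref{Le3}. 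I expect the only delicate step to be the estimate of $W_k$ and the verification that $K_k$ dominates it uniformly in $k$ — the function‑field analog of the corresponding estimate in \cite{BS2} — which requires carefully tracking the shift $\log(\ln N\ln_2 N)$ in $\psi$, the widths of the blocks $\mathbb{P}_k$, and the error term in \eqref{prime poly th}; the Euler‑product factorization and the Rankin optimization are then routine.
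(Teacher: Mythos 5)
Your proposal is correct and is essentially the paper's own argument: both proceed by the union bound over the blocks $k\le[(\ln_2 N)^{\gamma}]$ followed by a Rankin-type exponential-moment bound measured against the Euler product $\prod_{P\in\mathbb{P}}\left(1+\psi(P)^2+\psi(P)|P|^{-1/2}\right)=\mathcal{A}_N\sum_h\psi(h)^2$, and both hinge on the same delicate estimate that $\sum_{P\in\mathbb{P}_k}\left(\psi(P)^2+\psi(P)|P|^{-1/2}\right)$ is at most $(1+o(1))\ln N/(k^2\ln_3 N)$, so that the threshold $K_k=a\ln N/(k^2\ln_3 N)$ with $a>1$ strictly dominates it. The only difference is organizational: you run Rankin once on the combined local weight $\psi(P)^2+\psi(P)|P|^{-1/2}$, whereas the paper first reduces to $\mathcal{R}'_k$, bounds $\prod_{P\in\mathbb{P}_k}\left(1+\frac{1}{\psi(P)\sqrt{|P|}}\right)$ separately in \eqref{eq6}, and then applies Rankin with parameter $b$ to $\sum_{f\in\mathcal{R}'_k}\psi(f)^2$ in \eqref{eq7}, its condition $b-1-a\ln b<0$ playing exactly the role of your $\rho\ln\rho-\rho+1>0$.
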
  
 
 \begin{proof}
 	Note that, \begin{align}\label{eq4}
 			\frac{1}{\mcal{A}_{N}\sum_{h\in\mcal{M}} \psi(h)^{2}}\sum_{f\in\mcal{M}; \, f\notin \mathcal{R}}\frac{\psi{(f)}}{\sqrt{|f|}}\sum_{g|f}\psi(g)\sqrt{|g|} \nonumber \hspace{3cm}\\
 			\le\frac{1}{\mcal{A}_{N}\sum_{h\in\mcal{M}} \psi(h)^{2}}\sum_{k=1}^{[(\ln_{2}N)^{\gm}]}\sum_{f\in\mathcal{R}_{k}}\frac{\psi{(f)}}{\sqrt{|f|}}\sum_{g|f}\psi(g)\sqrt{|g|}.
 	\end{align}
 For each $k=1,\ldots,[(\ln_{2}N)^{\gm}]$, we have 
 \begin{align}\label{eq5}
 	\frac{1}{\mcal{A}_{N}\sum_{h\in\mcal{M}} \psi(h)^{2}}\sum_{f\in\mathcal{R}_{k}}\frac{\psi{(f)}}{\sqrt{|f|}}\sum_{g|f}\psi(g)\sqrt{|g|} \nonumber\hspace{4.5cm}\\
 	=\frac{1}{\prod_{P\in\mathbb{P}_{k}}\left(1+\psi(P)^2+\psi(P)|P|^{-1/2}\right)}\sum_{f\in\mathcal{R}_{k}^{\prime}}\frac{\psi{(f)}}{\sqrt{|f|}}\sum_{g|f}\psi(g)\sqrt{|g|} \nonumber\\
 	\le\frac{1}{\prod_{P\in\mathbb{P}_{k}}\left(1+\psi(P)^2\right)}\sum_{f\in\mathcal{R}_{k}^{\prime}}\psi(f)^{2}\prod_{P\in\mathbb{P}_{k}}\left(1+\frac{1}{\psi(P)\sqrt{|P|}}\right). \hspace{5mm}
 \end{align}
First we make the following computation:
\begin{align*}
\prod_{P\in\mathbb{P}_{k}} \left(1+\frac{1}{\psi(P)\sqrt{|P|}}\right)= \hspace{11cm}\\  \prod_{\substack{
	\log{(q^{k}\ln N \ln_{2} N)}< d(P)\le \log{(q^{k+1}\ln N \ln_{2} N)} }}
   \left(1+ (d(P)-\log(\ln N\ln_{2}N ) )\sqrt{\frac{\ln_{3}N}{\ln N\ln_{2}N}}\right). 
\end{align*}
Since $(1+x)^{c}\le e^{cx}$, this implies
\begin{align}\label{eq6}
\prod_{P\in\mathbb{P}_{k}} \left(1+\frac{1}{\psi(P)\sqrt{|P|}}\right)   & \le \left(1+ (k+1)\sqrt{\frac{\ln_{3}N}{\ln N\ln_{2}N}}\right)^{q^{k+1} \ln N} \nonumber\\
 &  \le \exp\left((k+1)q^{k+1} \sqrt{\frac{\ln N \ln_{3}N}{\ln_{2}N}} \right)=\exp\left(o\left(\frac{\ln N}{\ln_{3}N}\right)\frac{1}{k^2}\right),
\end{align}
as $k\le (\ln_{2}N)^{\gm}$.
Since every $f\in\mathcal{R}_{k}^{\prime}$ has at least $\frac{a\ln N}{k^2 \ln_{3}N}$ irreducibles and $\psi(f)$ is multiplicative function, it therefore follows that 
\[
\sum_{f\in\mathcal{R}_{k}^{\prime}}\psi(f)^{2}\le b^{-a\frac{\ln N}{k^2 \ln_{3}N}}\prod_{P\in\mathbb{P}_{k}}\left( 1+ b\psi(P)^2 \right),
\] whence $b>1$. Hence,
 \begin{align}\label{eq7}
 	\frac{1}{\prod_{P\in\mathbb{P}_{k}}\left(1+\psi(P)^2\right)}\sum_{f\in\mathcal{R}_{k}^{\prime}}\psi(f)^{2}\le b^{-a\frac{\ln N}{k^2 \ln_{3}N}}\exp\left((b-1)\sum_{Q\in\mathbb{P}_{k}}\psi(Q)^2 \right).
 \end{align}
Finally, 
\begin{align*}
\sum_{P\in\mathbb{P}_{k}}\psi(P)^2 &=\frac{\ln N \ln_{2}N}{\ln_{3}N } \sum_{P\in\mathbb{P}_{k}} |P|^{-1}\left(d(P)-\log(\ln N \ln_{2}N) \right)^{-2}\\
&\le\left(1+o(1)\right)\frac{\ln N \ln_{2}N}{\ln_{3}N} \int_{\log{(q^{k}\ln N \ln_{2} N)}}^{\log{(q^{k+1}\ln N \ln_{2} N)}} \frac{1}{k^2 x}dx  \\
&\le \left(1+o(1)\right)\frac{\ln N}{k^2 \ln_{3}N}.
\end{align*}
 Combining the last inequality with \eqref{eq6} and \eqref{eq7}, we get that \eqref{eq5} is at most \[ \exp\left((b -1-a\ln b +o(1))\frac{\ln N}{k^2 \ln_{3}N}\right) .\]
Note that $a>1$. Choosing $b$ sufficiently close to $1$, we obtain $(b -1)-a\ln b <0$, and completes the proof.
\end{proof} 

\begin{lemma}\label{Le5}
	Let $\ve$ be a positive real number. Then as $N\to\infty$, 
	\[ 
	\frac{1}{\sum_{h\in\mcal{M}} \psi(h)^{2}} \sum_{f\in\mathcal{R}}\frac{\psi{(f)}}{\sqrt{|f|}}\sum_{\substack{g|f\\ d(g)\le d(f)-\ve\log N}}\psi(g)\sqrt{|g|}=o(\mcal{A}_{N}),
	\]
	where the implicit constant only depends on $\ve$.
\end{lemma}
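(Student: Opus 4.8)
The strategy is to trade the divisor condition for a size condition on the complementary divisor, factor the resulting double sum, and then annihilate the remaining one--variable sum by Rankin's trick.

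First I would use that every $f\in\mathcal{R}$ is square-free with all irreducible factors in $\mathbb{P}$. For a divisor $g\mid f$, put $h:=f/g$; then $g$ and $h$ are coprime, square-free and lie in $\mathrm{supp}(\psi)$, $\psi(f)=\psi(g)\psi(h)$, $|f|=|g|\,|h|$, so that
\[
\frac{\psi(f)}{\sqrt{|f|}}\,\psi(g)\sqrt{|g|}=\frac{\psi(g)^{2}\psi(h)}{\sqrt{|h|}},
\]
while $d(g)\le d(f)-\ve\log N$ is equivalent to $d(h)\ge\ve\log N$. Hence the left-hand side of the lemma equals $\bigl(\sum_{h\in\mathcal{M}}\psi(h)^{2}\bigr)^{-1}\sum_{gh\in\mathcal{R},\,(g,h)=1,\,d(h)\ge\ve\log N}\psi(g)^{2}\psi(h)|h|^{-1/2}$. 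Since every pair $(g,h)$ occurring here has $g,h\in\mathrm{supp}(\psi)\subseteq\mathcal{M}$, dropping the constraints $\gcd(g,h)=1$ and $gh\in\mathcal{R}$ (all summands are nonnegative) bounds it by
\[
\frac{1}{\sum_{h\in\mathcal{M}}\psi(h)^{2}}\Bigl(\sum_{g\in\mathcal{M}}\psi(g)^{2}\Bigr)\Bigl(\sum_{h\in\mathcal{M},\,d(h)\ge\ve\log N}\frac{\psi(h)}{\sqrt{|h|}}\Bigr)=\Sigma,\qquad\Sigma:=\sum_{\substack{h\in\mathcal{M}\\ d(h)\ge\ve\log N}}\frac{\psi(h)}{\sqrt{|h|}}.
\]
Since $\mathcal{A}_{N}\to\infty$ by Lemma \ref{Le2}, it suffices to prove $\Sigma\to0$; as $q,a,\gm$ are fixed, the rate then depends only on $\ve$.

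Next I would apply Rankin's trick. The set $\mathbb{P}$ is finite, so all products below are finite. For $\beta>0$ the bound $d(h)\ge\ve\log N$ gives $|h|^{\beta}=q^{\beta d(h)}\ge q^{\beta\ve\log N}=N^{\beta\ve}$, whence
\[
\Sigma\ \le\ N^{-\beta\ve}\sum_{h\in\mathcal{M}}\psi(h)\,|h|^{\beta-1/2}\ =\ N^{-\beta\ve}\prod_{P\in\mathbb{P}}\bigl(1+\psi(P)\,|P|^{\beta-1/2}\bigr).
\]
I would calibrate $\beta=\bigl(\log(\ln N\ln_{2}N)\bigr)^{-1}$. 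Then for every $P\in\mathbb{P}$ and all large $N$ one has $d(P)\le(\ln_{2}N)^{\gm}+\log(\ln N\ln_{2}N)\le 2\log(\ln N\ln_{2}N)$ (using $\gm<1$), so $|P|^{\beta}=q^{\beta d(P)}\le q^{2}$, giving $\psi(P)|P|^{\beta-1/2}\le q^{2}\psi(P)|P|^{-1/2}$ and
\[
\prod_{P\in\mathbb{P}}\bigl(1+\psi(P)|P|^{\beta-1/2}\bigr)\ \le\ \exp\Bigl(q^{2}\sum_{P\in\mathbb{P}}\psi(P)|P|^{-1/2}\Bigr).
\]

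Finally I would combine the two estimates. From the computation in the proof of Lemma \ref{Le2} (or crudely from $\sum_{P\in\mathbb{P}}|P|^{-1}\ll1$ via the Prime Polynomial Theorem \eqref{prime poly th}), $\sum_{P\in\mathbb{P}}\psi(P)|P|^{-1/2}\ll\sqrt{\ln N\ln_{3}N/\ln_{2}N}$; and $N^{-\beta\ve}=\exp\bigl(-\ve\ln N/\log(\ln N\ln_{2}N)\bigr)$ with $\log(\ln N\ln_{2}N)=(\ln_{2}N+\ln_{3}N)/\ln q\ll\ln_{2}N$. Hence
\[
\Sigma\ \le\ \exp\!\Bigl(O\bigl(\sqrt{\ln N\ln_{3}N/\ln_{2}N}\bigr)-\frac{\ve\ln N}{\log(\ln N\ln_{2}N)}\Bigr),
\]
and since $\sqrt{\ln N\ln_{3}N/\ln_{2}N}=o(\ln N/\ln_{2}N)$ the negative term dominates, so $\Sigma\to0$, which completes the proof. (A purely combinatorial variant also works: $d(h)\ge\ve\log N$ forces $h$ to have $\gg\ln N/\ln_{2}N$ irreducible factors, because every prime in $\mathbb{P}$ has degree $\asymp\ln_{2}N$, so $\Sigma\le\sum_{j\ge j_{0}}T^{j}/j!\le2(eT/j_{0})^{j_{0}}$ with $T:=\sum_{P\in\mathbb{P}}\psi(P)|P|^{-1/2}=o(j_{0})$.) The one genuinely delicate point is the calibration of $\beta$: a fixed $\beta$ fails, since $\psi(P)|P|^{\beta-1/2}$ can be large on $\mathbb{P}$, and one must let $\beta\to0$ slowly enough that $|P|^{\beta}$ stays bounded while $N^{-\beta\ve}$ still beats $\exp(q^{2}T)$ — which is possible precisely because the primes of $\mathbb{P}$ have degree only $\asymp\ln_{2}N\ll\ln N$.
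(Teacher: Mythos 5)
Your proof is correct, but after the common first step it follows a genuinely different route from the paper's. Both arguments pass to the complementary divisor and both finish with Rankin's trick; however, the paper keeps the sum coupled: it rewrites the summand as $\psi(f)^2\bigl(\psi(l)\sqrt{|l|}\bigr)^{-1}$ with $l=f/g$ and shows, uniformly for each fixed $f\in\mcal{R}$, that $\sum_{l\mid f,\,d(l)\ge\ve\log N}\bigl(\psi(l)\sqrt{|l|}\bigr)^{-1}=o(1)$, using the fixed Rankin exponent $|l|^{-1/2}\le N^{-\ve/4}|l|^{-1/4}$ together with two facts special to $\mcal{R}$ and $\mathbb{P}$: each $f\in\mcal{R}$ has only $O(\ln N/\ln_3 N)$ irreducible factors, and $\bigl(\psi(P)|P|^{1/4}\bigr)^{-1}=o(1)$ uniformly on $\mathbb{P}$. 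You instead decouple completely, dropping the constraints $gh\in\mcal{R}$ and $(g,h)=1$ so that everything reduces to the single unrestricted sum $\Sigma=\sum_{d(h)\ge\ve\log N}\psi(h)|h|^{-1/2}$, which you then beat with a Rankin parameter $\beta\asymp 1/\ln_2 N$, the point being that the full Euler product over $\mathbb{P}$ then costs only $\exp\bigl(O\bigl(\sqrt{\ln N\ln_3 N/\ln_2 N}\bigr)\bigr)$ against $N^{-\beta\ve}=\exp\bigl(-(1+o(1))\ve\ln q\,\ln N/\ln_2 N\bigr)$. Your version never uses the combinatorial restriction defining $\mcal{R}$ (the bound on the number of irreducibles per block) and proves the stronger assertion that even the sum extended over all of $\mathrm{supp}(\psi)$ is $o(1)$; the paper's version is shorter and yields a pointwise bound valid for every $f\in\mcal{R}$, which is all the main argument needs. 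Two small inaccuracies, neither affecting your proof: the ``crude'' bound via $\sum_{P\in\mathbb{P}}|P|^{-1}\ll 1$ only yields $\sum_{P\in\mathbb{P}}\psi(P)|P|^{-1/2}\ll\sqrt{\ln N\ln_2 N/\ln_3 N}$, not the stated $\sqrt{\ln N\ln_3 N/\ln_2 N}$ (still ample, since it is $o(\ln N/\ln_2 N)$); and your closing claim that a fixed $\beta$ must fail is not right --- for any fixed $\beta\in(0,1/2)$ one has $\psi(P)|P|^{\beta-1/2}\to 0$ uniformly on $\mathbb{P}$ and $\sum_{P\in\mathbb{P}}\psi(P)|P|^{\beta-1/2}\ll(\ln N)^{1/2+\beta+o(1)}=o(\ln N)$, so for instance $\beta=1/4$ (the paper's exponent) also works for your global sum.
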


\begin{proof}
	We have \begin{align*}
	\sum_{f\in\mathcal{R}}\frac{\psi{(f)}}{\sqrt{|f|}}\sum_{\substack{g|f\\ d(g)\le d(f)-\ve\log N}}\psi(g)\sqrt{|g|}=\sum_{f\in\mathcal{R}}\psi{(f)}^2 \sum_{\substack{l|f\\ d(l)\ge \ve\log N}} \frac{1}{\psi{(l)}\sqrt{|l|}}.
	\end{align*}
	It is therefore enough to show that, for each $f\in\mathcal{R}$,   
	\[
	\sum_{\substack{l|f\\ d(l)\ge \ve\log N}} \frac{1}{\psi{(l)}\sqrt{|l|}}=o(1),\;\; N\to\infty.
	\]
	Finally, 
	\[
	\sum_{\substack{l|f\\ d(l)\ge \ve\log N}} \frac{1}{\psi{(l)}\sqrt{|l|}}\le N^{-\ve/4}\sum_{l|f} \frac{1}{\psi{(l)}|l|^{1/4}}= N^{-\ve/4}\prod_{P|f}\left(1+\frac{1}{\psi{(P)}|P|^{1/4}} \right)=o(1),
	\]
 since $\frac{1}{\psi{(P)}|P|^{1/4}}=o(1)$ uniformly for all $P\in\mathbb{P}$, and the polynomial $f$ has at most $\frac{a\ln N}{\ln_{3}N}$ irreducible polynomials.
	
\end{proof}

\begin{lemma}\label{Le4} Let $l\in \mathcal{M}$. Then we have 
\[
\sum_{P\in\mcal{P}_{n}}\chi_{P}(l^{2})=\frac{q^{n}}{n}+O\left(\frac{q^{n/2}}{n}\right)+O(\log \log d(l)).
\]
\begin{proof}
	It is a direct application of Prime Polynomial Theorem \eqref{prime poly th}.
	\end{proof}
\end{lemma}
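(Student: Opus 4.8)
The plan is to reduce the character sum to a count of monic irreducible polynomials. First I would use that $\chi_P$ is completely multiplicative and $\{-1,0,1\}$-valued, so that for every $P\in\mcal{P}_n$ one has $\chi_P(l^2)=\chi_P(l)^2=\mathbf{1}_{\{P\nmid l\}}$. Hence
\[
\sum_{P\in\mcal{P}_n}\chi_P(l^2)=\#\{P\in\mcal{P}_n:\ P\nmid l\}=|\mcal{P}_n|-\#\{P\in\mcal{P}_n:\ P\mid l\}.
\]
The first term is handled directly by the Prime Polynomial Theorem \eqref{prime poly th}, which gives $|\mcal{P}_n|=q^n/n+O(q^{n/2}/n)$ and thereby supplies both the main term and the $O(q^{n/2}/n)$ error in the statement.

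It then remains to bound $\#\{P\in\mcal{P}_n:\ P\mid l\}$. Each such $P$ is a distinct monic irreducible factor of $l$ of degree exactly $n$; since these factors contribute at least $n$ to $d(l)$ apiece, their number is at most $d(l)/n$, and a fortiori at most $\omega(l)$, the number of distinct irreducible divisors of $l$. This crude estimate is all that is needed: it is $O(\log\log d(l))$ in the ranges where the lemma is applied — indeed, whenever every irreducible factor of $l$ has degree less than $n$ (which is the case for $l\in\mcal{R}$ against $n=2g+1$, since $d(h)\ll\ln N\ln_2 N$ for $h\in\mcal{R}$), this count is simply $0$ — and in any event it is eventually absorbed into the $O(q^{n/2}/n)$ term in the moment computation.

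I do not expect any real obstacle here: once the identity $\chi_P(l^2)=\mathbf{1}_{\{P\nmid l\}}$ is recorded, the statement is an immediate consequence of the Prime Polynomial Theorem together with a trivial divisor count, and the only point requiring a line of justification is the contribution of the primes dividing $l$, which is negligible. If a sharper error term were ever required, one could instead invoke $\omega(l)\ll d(l)/\log_q d(l)$ for a general $l$, or the explicit cap on the number of irreducible factors built into the construction of $\mcal{R}$.
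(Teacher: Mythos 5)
Your proposal is correct and is essentially the paper's own argument: the paper's proof is the one-line remark that the lemma is a direct application of the Prime Polynomial Theorem, which is precisely your reduction $\chi_P(l^2)=\mathbf{1}_{\{P\nmid l\}}$ followed by the count $|\mathcal{P}_n|-\#\{P\in\mathcal{P}_n: P\mid l\}$. Your extra remark that the primes dividing $l$ contribute at most $d(l)/n$ (indeed $0$ in the range where the lemma is actually applied, since $d(l)<n$ there) is more explicit than the paper and correctly notes that the stated $O(\log\log d(l))$ term is just a generous placeholder for this negligible deficit.
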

We need the following crucial lemma that estimates a certain character sum over irreducible using the known Riemann hypothesis for curves.
\begin{lemma}[Weil bound]\label{lem5}
Let $f\in \mathcal{M}$ and $f$ is not a perfect square, then
\[
\Big|\sum_{P\in\mcal{P}_{n}}\chi_{P}(f)\Big|\ll \frac{d(f)}{n} q^{n/2}.
\]	
\end{lemma}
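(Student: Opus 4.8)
The plan is to pass from the character sum over monic irreducible polynomials $P$ of degree $n$ to a sum over zeros of the associated $L$-functions, and then invoke the Riemann hypothesis for curves (Weil's theorem) to bound those zeros uniformly. First I would express $\chi_P(f)$ for varying $P$ via quadratic reciprocity in $\mathbb{F}_q[t]$: writing $f = c\,f_1^{e_1}\cdots f_r^{e_r}$ with $f_i$ monic irreducible and $c\in\mathbb{F}_q^\times$, the symbol $\chi_P(f) = \left(\frac{f}{P}\right)$ is, up to an explicit sign governed by $\deg f$, $\deg P$ and the sign of $c$ (the function-field reciprocity law, see Rosen \cite{ROS}), equal to $\prod_i \left(\frac{P}{f_i}\right)^{e_i}$. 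Collecting the factors with odd exponent $e_i$ and using that $f$ is \emph{not} a perfect square, we get a nontrivial character $\psi_f := \prod_{e_i \text{ odd}} \left(\frac{\cdot}{f_i}\right)$ modulo the radical $m = \prod_{e_i\text{ odd}} f_i$, which is a nonprincipal even or odd Dirichlet character mod $m$ of conductor dividing $m$. Thus $\sum_{P\in\mathcal{P}_n}\chi_P(f)$ becomes, up to the explicit sign depending only on $n$, the sum $\sum_{P\in\mathcal{P}_n}\psi_f(P)$ of a fixed nonprincipal Dirichlet character over monic irreducibles of degree $n$.

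Next I would apply the explicit formula (Prime Polynomial Theorem with a character): for a nonprincipal character $\psi$ mod $m$, one has
\[
\sum_{P\in\mathcal{P}_n}\psi(P) = -\frac{1}{n}\sum_{j}\alpha_{\psi,j}^{\,n} + O\!\left(\frac{2^{\omega(m)}}{n}\,?\right),
\]
more precisely from the Euler product and logarithmic derivative of $\mathcal{L}(u,\psi)$ one gets $\sum_{d\mid n} d\,|\mathcal{P}_d^{(\psi)}| = -\sum_j \alpha_{\psi,j}^n$ where the $\alpha_{\psi,j}$ are the inverse roots of the (completed) $L$-polynomial attached to $\psi$, and Möbius inversion recovers $n\,\pi_\psi(n)$ with an error absorbing the ramified/trivial-zero contributions of size $O(q^{n/2})$ times something controlled by $\deg m$. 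Since $\mathcal{L}(u,\psi)$ is a polynomial of degree at most $\deg m - 1$ and, by Weil's Riemann Hypothesis for curves, each $|\alpha_{\psi,j}| \le \sqrt{q}$, we obtain
\[
\Big|\sum_{P\in\mathcal{P}_n}\psi_f(P)\Big| \;\le\; \frac{\deg m - 1}{n}\,q^{n/2} + (\text{lower order}) \;\ll\; \frac{\deg m}{n}\, q^{n/2} \;\le\; \frac{d(f)}{n}\, q^{n/2},
\]
using $\deg m \le \deg(\mathrm{rad}\,f) \le d(f)$. Undoing the reciprocity sign (which has modulus $1$) gives the claimed bound for $\sum_{P\in\mathcal{P}_n}\chi_P(f)$.

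The main obstacle, and the step needing the most care, is the bookkeeping in the reciprocity step: the quadratic reciprocity law in $\mathbb{F}_q[t]$ carries a sign $(-1)^{\frac{q-1}{2}\deg P \deg f_i}$ (and the analogous treatment of the leading coefficient of $f$ and of $P$), so one must check that after summing over all $P$ of fixed degree $n$ these signs are \emph{constant} in $P$ — which they are, since $\deg P = n$ is fixed and $P$ is monic — and therefore factor out of the sum without affecting its absolute value. One also has to confirm that $\psi_f$ is genuinely nonprincipal (this is exactly where "$f$ not a perfect square" is used, so that at least one $e_i$ is odd and $m\neq 1$) and that the degree of its $L$-polynomial is at most $d(f)$ rather than merely $\deg m - 1$ plus a harmless trivial zero; both are immediate from the discussion preceding \eqref{genus}. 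The analytic input — Euler product, logarithmic derivative, Möbius inversion, and the Weil bound $|\alpha_{\psi,j}|\le\sqrt q$ — is then entirely standard, exactly as in the proof of \eqref{prime poly th} but with a nontrivial character inserted.
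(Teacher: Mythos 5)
Your argument is correct and is essentially the standard proof of the bound that the paper simply cites (equation $(2.5)$ of Rudnick \cite{Rud}): quadratic reciprocity converts $\chi_P(f)$, up to a sign constant in $P$ since $\deg P=n$ is fixed, into a fixed nonprincipal character $\psi_f$ modulo the odd-exponent radical $m$ of $f$, and the explicit formula plus Weil's Riemann Hypothesis (at most $\deg m-1\le d(f)$ inverse roots of modulus at most $\sqrt{q}$, prime-power terms $O(q^{n/2})$) gives the stated bound $\ll \frac{d(f)}{n}q^{n/2}$. The only bookkeeping left implicit is the finitely many $P\mid f$ with $P\nmid m$, where $\chi_P(f)=0$ but $\psi_f(P)\neq 0$; these contribute at most $d(f)/n$ and are absorbed into the bound.
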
	
\begin{proof}
	See equation $(2.5)$ of \cite{Rud}.
	\end{proof}
\begin{lemma}[Approximate Functional Equation]\label{Approximate Functional Equation}
Let $\chi_P$ be a quadratic Dirichlet character, where $P\in \mathcal{P}_n$. Then 
\begin{align*}
	L(1/2,\chi_{P})&=\sum_{f\in \mathcal{M}_{\leq g}}\frac{\chi_{P}(f)}{|f|^{1/2}}\; +  \sum_{f\in \mathcal{M}_{\leq g-1}}\frac{\chi_{P}(f)}{|f|^{1/2}}\\
	&-\lambda q^{-(g+1)/2}\sum_{f\in \mathcal{M}_{\leq g}}\chi_P(f)- \lambda q^{-g/2} \sum_{f\in \mathcal{M}_{\leq g-1}}\chi_P(f),
\end{align*} 	
where $n,\, g,\, \lambda$ are related by equations \eqref{lambda} and \eqref{genus}.
\end{lemma}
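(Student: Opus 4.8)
The plan is to derive the approximate functional equation directly from the exact polynomial structure of $\mathcal{L}(u,\chi_P)$, rather than from a contour-integral truncation as in the number-field setting. First I would write $\mathcal{L}(u,\chi_P)=\sum_{f\in\mathcal{M}}\chi_P(f)u^{d(f)}=\sum_{m\geq 0}A_m u^m$, where $A_m=\sum_{f\in\mathcal{M}_m}\chi_P(f)$, and recall from the excerpt that $A_m=0$ for all $m\geq d(P)=n$. Using the factorization $\mathcal{L}(u,\chi_P)=(1-u)^\lambda\mathcal{L}^*(u,\chi_P)$ with $\deg\mathcal{L}^*=2g=n-1-\lambda$, I see that $\mathcal{L}(u,\chi_P)$ is genuinely a polynomial of degree $n-1$, and the functional equation $\mathcal{L}^*(u,\chi_P)=(qu^2)^g\mathcal{L}^*(1/(qu),\chi_P)$ translates into a symmetry relating the coefficients $A_m$ and $A_{n-1-m}$. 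Concretely, multiplying the functional equation for $\mathcal{L}^*$ by $(1-u)^\lambda$ and comparing with $(1-u)^\lambda$ applied to the reciprocal polynomial, one obtains an identity of the shape $\mathcal{L}(u,\chi_P)=(qu^2)^g(\text{reciprocal of }\mathcal{L}^*)(1-u)^\lambda/(\cdots)$; cleaning this up gives a relation expressing the ``high" coefficients $A_m$ for $m$ near $n-1$ in terms of the ``low" ones, with an extra $\lambda$-dependent shift coming from the $(1-u)^\lambda$ factor.

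Next I would specialize to the critical point $s=1/2$, i.e. $u=q^{-1/2}$. The naive Dirichlet series $\sum_f \chi_P(f)|f|^{-1/2}$ does not converge, but since $\mathcal{L}(u,\chi_P)$ is a polynomial we simply have $L(1/2,\chi_P)=\sum_{m=0}^{n-1}A_m q^{-m/2}$. The idea is to split this finite sum at the midpoint. Writing $n-1=2g+\lambda$, the coefficients with $m\leq g$ contribute the first sum $\sum_{f\in\mathcal{M}_{\leq g}}\chi_P(f)|f|^{-1/2}$; for the coefficients with $m>g$ I substitute the functional-equation relation expressing $A_m$ in terms of $A_{n-1-m}$ (together with the $\lambda$ correction), which after reindexing $m\mapsto n-1-m$ and collecting the $q^{-m/2}$ weights produces a second ``dual" sum running over $\mathcal{M}_{\leq g-1}$ (the shift from $g$ to $g-1$ being exactly the parity bookkeeping forced by $2g=n-1-\lambda$). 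The $\lambda$-terms, which are absent when $d(P)$ is odd, arise because in the even case the factor $(1-u)^\lambda=(1-u)$ contributes the additional pieces $-\lambda q^{-(g+1)/2}\sum_{f\in\mathcal{M}_{\leq g}}\chi_P(f)$ and $-\lambda q^{-g/2}\sum_{f\in\mathcal{M}_{\leq g-1}}\chi_P(f)$ when one expands $(1-u)$ times the reciprocal polynomial at $u=q^{-1/2}$.

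The main obstacle I expect is the careful bookkeeping of the two cases $\lambda=0$ and $\lambda=1$ simultaneously, and in particular verifying that the endpoints of the four sums land precisely at $\mathcal{M}_{\leq g}$ and $\mathcal{M}_{\leq g-1}$ with the stated powers $q^{-(g+1)/2}$ and $q^{-g/2}$ on the $\lambda$-terms. This is purely a matter of tracking degrees through the identity $2g=d(P)-1-\lambda$ and the reciprocity $\mathcal{L}^*(u)=(qu^2)^g\mathcal{L}^*(1/(qu))$, but it is the step where an off-by-one error is most likely. A clean way to organize it is to first prove the statement for odd $d(P)$ (where $\lambda=0$ and the functional equation is homogeneous, giving immediately $L(1/2,\chi_P)=\sum_{\mathcal{M}_{\leq g}}\chi_P(f)|f|^{-1/2}+\sum_{\mathcal{M}_{\leq g-1}}\chi_P(f)|f|^{-1/2}$), and then handle even $d(P)$ by applying the odd case to $\mathcal{L}^*$ and multiplying through by $(1-q^{-1/2})$, which generates exactly the two $\lambda$-terms. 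An alternative, if the coefficient comparison proves awkward, is to cite the general function-field approximate functional equation (e.g. from \cite{Rud} or \cite{AK}) and simply evaluate it at $s=1/2$; but the self-contained coefficient argument above is preferable and should be routine once the indexing is pinned down.
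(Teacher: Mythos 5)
Your proposal is correct and is essentially the argument the paper invokes: the paper's proof is only a citation to Andrade--Keating \cite{AK} and Jung \cite{J}, whose derivations proceed exactly as you describe, by splitting the coefficient sum of $\mathcal{L}^*(u,\chi_P)$ at the midpoint and using the symmetry $A_m^*=q^{m-g}A_{2g-m}^*$ from the functional equation, then handling $\lambda=1$ by multiplying by $(1-q^{-1/2})$. Your indexing plan does check out: at $u=q^{-1/2}$ the odd case gives the two sums over $\mathcal{M}_{\leq g}$ and $\mathcal{M}_{\leq g-1}$ directly, and in the even case the telescoping of $(1-q^{-1/2})\sum_{m\leq g}A_m^*q^{-m/2}$ (with $A_m^*=\sum_{f\in\mathcal{M}_{\leq m}}\chi_P(f)$) produces precisely the boundary terms $-q^{-(g+1)/2}\sum_{f\in\mathcal{M}_{\leq g}}\chi_P(f)$ and $-q^{-g/2}\sum_{f\in\mathcal{M}_{\leq g-1}}\chi_P(f)$.
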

\begin{proof} 
	The case $s = \frac12$ is proved in \cite{AK} for $D \in\mathcal{H}_{2g+1}$ and \cite{J} for $D \in\mathcal{H}_{2g+1}$. Exactly the same way, one can prove it for $P \in\mathcal{P}_{n}$.
\end{proof}

\section{Proof of the Theorem}
Recall that $\gamma\in (0, 1)$ and $\mathcal{R}$ are set of square-free polynomials with
 $|\mathcal{R}|\le N$ from Lemma \ref{Le1}. Later, we will choose the size of $N$ appropriately.
 
\noindent
We define
\begin{align*} \mcal{S}_{1}:=\sum_{P\in\mcal{P}_{2g+1}}|R(\chi_P)|^2,
\quad \text{ and } \quad
 \mcal{S}_{2}:=\sum_{P\in\mcal{P}_{2g+1}}L\left(\frac{1}{2},\chi_{P}\right)|R(\chi_P)|^2 ,
\end{align*}
where $  
R(\chi_P):=\sum_{h\in\mathcal{R}}\psi(h)\chi_{P}(h)
$ and $\psi$ is a non-negative function on $\mathcal{M}$.
So,
\[ \max_{P\in\mcal{P}_{2g+1}} \big|L(1/2,\chi_{P})\big|\ge \frac{\mcal{S}_2}{\mcal{S}_1}. 
\]

\noindent
First we compute a lower bound of $\mcal{S}_{2}$. Using Lemma \ref{Approximate Functional Equation}, we have
\begin{align*}
	\mcal{S}_{2}=&\sum_{P\in\mcal{P}_{2g+1}}L\left(\tfrac{1}{2},\chi_{P}\right)|R(\chi_P)|^2 \\
	=&\sum_{h_1 ,h_2 \in \mcal{R}} \psi(h_1) \psi(h_2)\sum_{f\in\mcal{M}_{\le g}} \frac{1}{\sqrt{|f|}}\sum_{P\in\mcal{P}_{2g+1}}\chi_{P}(fh_1 h_2) \\
	& + \sum_{h_1 ,h_2 \in \mcal{R}}\psi(h_1) \psi(h_2)\sum_{f\in\mcal{M}_{\le g-1}} \frac{1}{\sqrt{|f|}}\sum_{P\in\mcal{P}_{2g+1}}\chi_{P}(fh_1 h_2)\\
     :=& \mcal{S}_{21} + \mcal{S}_{22}.
\end{align*}
Separate the innermost into square and non-square parts as $fh_1h_2=\square$ and $fh_1h_2\neq \square$. So, using Lemma \ref{Le4} and Lemma \ref{lem5}, we obtain	
\begin{align*}	
    \mcal{S}_{21}&=|\mcal{P}_{2g+1}| \sum_{h_1 ,h_2 \in \mcal{R}}\psi(h_1) \psi(h_2)\sum_{\substack{f\in\mcal{M}_{\le g}\\ h_1 h_2 f=\square}} \frac{1}{\sqrt{|f|}}  +O\Bigg(\frac{q^{g}}{g}\sum_{\substack{f\in\mcal{M}_{\le g}\\h_1 ,h_2 \in \mcal{R}\\ h_1 h_2 f\neq\square}} \frac{d(fh_1 h_2)\,\psi(h_1) \psi(h_2)}{\sqrt{|f|}}\Bigg),\\
	\mcal{S}_{22}&=|\mcal{P}_{2g+1}| \sum_{h_1 ,h_2 \in \mcal{R}}\psi(h_1) \psi(h_2)\sum_{\substack{f\in\mcal{M}_{\le g-1}\\ h_1 h_2 f=\square}} \frac{1}{\sqrt{|f|}} +O\Bigg(\frac{q^{g}}{g}  \sum_{\substack{f\in\mcal{M}_{\le g-1}\\h_1 ,h_2 \in \mcal{R}\\ h_1 h_2 f\neq\square}} \frac{d(fh_1 h_2)\,\psi(h_1) \psi(h_2)}{\sqrt{|f|}}\Bigg).
	\end{align*}
Since the coefficients $\psi(h)$ are non-negative, picking the term only when $fh_1=h_2$, we end up with
\begin{align*}
	\sum_{h_1 ,h_2 \in \mcal{R}}\psi(h_1) \psi(h_2) \sum_{\substack{f\in\mcal{M}_{\le g}\\ h_1 h_2 f=\square}} \frac{1}{\sqrt{|f|}} \ge \sum_{h_2 \in\mcal{R}} \frac{\psi(h_2)}{\sqrt{|h_2|}} \sum_{\substack{h_1 |h_2\\ d(h_1)\ge d(h_2)-\ve g}} \psi(h_1)\sqrt{|h_1|}.
\end{align*}
An important observation that $d(fh_1h_2)\ll g+\ln N \ln_{2} N$, which helps to control the non-square part due to Weil's bound. Therefore
\begin{align*}
	 \mcal{S}_{21}&\ge|\mcal{P}_{2g+_1}|  \sum_{h_2 \in\mcal{R}} \frac{\psi(h_2)}{\sqrt{|h_2|}} \sum_{\substack{h_1 |h_2\\ d(h_1)\ge d(h_2)-\ve g}} \psi(h_1)\sqrt{|h_1|} + O\left(q^{\frac{3g}{2}} N(g+\ln N \ln_{2} N) \left(\sum_{\substack{h\in \mcal{R}}}\psi(h)^{2}\right) \right)\\
	 &\ge |\mcal{P}_{2g+1}| \exp\left((\gm+ o(1))\sqrt{\frac{\ln N\ln_{3}N}{\ln_{2}N}}\right) \left(1+ O\left(\frac{N(g+\ln N \ln_{2} N) }{q^{\frac{g}{2}}}\right) \right)\left(\sum_{\substack{h\in \mcal{R}}}\psi(h)^{2}\right).
\end{align*}	 
Take $\ve= \frac{1}{2} -\ve^{\prime}$ 
 and $N\asymp q^{g(\frac{1}{2}-\ve^{\prime})}$, where $0<\ve^{\prime}<1/2$ is arbitrary small. We use Lemma \ref{Le2}, Lemma \ref{Le3}, and Lemma \ref{Le5} to get
\begin{align*}	 
	\mcal{S}_{21}  \gg \left(1+ o(1) \right) |\mcal{P}_{2g+1}| \exp\left((\gm + o(1))\sqrt{\frac{\ln N\ln_{3}N}{\ln_{2}N}}\right) \left(\sum_{\substack{h\in \mcal{R}}}\psi(h)^{2}\right).
\end{align*}
Similarly, we have
 \begin{align*}	 
 	\mcal{S}_{22}  \gg \left(1+ o(1) \right) |\mcal{P}_{2g+1}| \exp\left((\gm+ o(1))\sqrt{\frac{\ln N\ln_{3}N}{\ln_{2}N}}\right) \left(\sum_{\substack{h\in \mcal{R}}}\psi(h)^{2}\right).
 \end{align*}
Therefore, 
\begin{align*}	 
	\mcal{S}_{2}  \gg \left(1+ o(1) \right) |\mcal{P}_{2g+1}| \exp\left((\gm+ o(1))\sqrt{\frac{\ln N\ln_{3}N}{\ln_{2}N}}\right) \left(\sum_{\substack{h\in \mcal{R}}}\psi(h)^{2}\right).
\end{align*}

Now we estimate an upper bound of $\mcal{S}_1$. Following the above arguments and applying Cauchy-Schwarz inequality, we have
\begin{align*}
	\mcal{S}_{1}&=\sum_{P\in\mcal{P}_{2g+1}}|R(\chi_P)|^2=\sum_{h_1 ,h_2 \in \mcal{R}}\psi(h_1) \psi(h_2)\sum_{P\in\mcal{P}_{2g+1}}\chi_{P}(h_1 h_2)\\
	            &=|\mcal{P}_{2g+1}|	\sum_{\substack{h_1 ,h_2 \in \mcal{R} \\ h_1 h_2 =\square}}\psi(h_1) \psi(h_2) + O\left(\frac{q^{g}}{g}\sum_{\substack{h_1 ,h_2 \in \mcal{R} \\ h_1 h_2 \neq\square}}d(h_1 h_2) \psi(h_1) \psi(h_2) \right)\\
	            &\le|\mcal{P}_{2g+1}|	\sum_{\substack{h \in \mcal{R}}}\psi(h)^{2} + O\left(\frac{q^{g}}{g}(g+\ln N \ln_{2} N) \, |\mcal{R}|\sum_{\substack{h\in \mcal{R}}}\psi(h)^{2} \right)\\
	            &\le \left(1+O\left(\frac{N (g+\ln N \ln_{2} N) }{q^{g}}\right)\right)|\mcal{P}_{2g+1}|\left(\sum_{\substack{h\in \mcal{R}}}\psi(h)^{2}\right).
\end{align*}	
Therefore, the above choice of $N\asymp q^{g\left(\frac{1}{2}-\ve^{\prime}\right)}$ gives
\begin{align*}            
  \mcal{S}_{1} \le \left(1+o(1)\right)|\mcal{P}_{2g+1}|
	            \left(\sum_{\substack{h\in \mcal{R}}}\psi(h)^{2}\right).
\end{align*}

Hence for sufficiently large $g$ and for arbitrary small $\ve^{\prime}>0$, we get

 \begin{align*}
	\max_{P\in\mcal{P}_{2g+1}} \big|L(1/2,\chi_{P})\big|&\ge \frac{\mcal{S}_2}{\mcal{S}_1}\gg \exp\left((\gm+ o(1))\sqrt{\frac{\ln N\ln_{3}N}{\ln_{2}N}}\right)\\
	&=\exp\left((\gm+ o(1))\,
\sqrt{(\tfrac{1}{2}-\ve^{\prime})\ln q} \sqrt{\frac{g\ln_{2}g}{\ln g}}\right),
\end{align*}
which finishes the proof.\\

\textbf{Acknowledgment:} 
The authors would like to thank Kristian Seip for reading of the first version of this manuscript, valuable comments, and suggestions.
 The first author is funded by Grant 275113 of the Research Council of Norway through the Alain Bensoussan Fellowship Programme of the European Research
Consortium for Informatics and Mathematics. The second author is funded by the joint FWF-ANR project Arithrand: FWF: I 4945-N and ANR-20-CE91-0006.

  \end{document}